\numberwithin{equation}{section}
\newtheorem{theorem}{Theorem}[section]
\newtheorem{lemma}[theorem]{Lemma}
\newtheorem{corollary}[theorem]{Corollary}
\newtheorem{remark}[theorem]{Remark}
\newtheorem{proposition}[theorem]{Proposition}
 \newcommand{\LL}{\ensuremath{\mathcal{L}}}
 \newcommand{\HH}{\ensuremath{\mathcal{H}}}
\newcommand{\DDD}{\ensuremath{\mathcal{D}}}
\newcommand{\DD}{\ensuremath{\mathbb{D}}}
\newcommand{\RR}{\ensuremath{\mathbb{R}}}
\newcommand{\NN}{\ensuremath{\mathbb N}}
\newcommand{\Id}{\ensuremath{\mathrm{Id}}}
\newcommand{\Bid}{\ensuremath{\mathbf{B}}}
 \newcommand{\Leb}{\ensuremath{\mathbf{L}}}
\newcommand{\BB}{\ensuremath{\mathcal{B}}}
\newcommand{\FF}{\ensuremath{\mathbb{F}}}
\newcommand{\GG}{\ensuremath{\mathcal{G}}}
\newcommand{\CC}{\ensuremath{\mathbb{C}}}
\newcommand{\XX}{\ensuremath{\mathbb{X}}}
\newcommand{\YY}{\ensuremath{\mathbb{Y}}}
\newcommand{\xx}{\ensuremath{\mathbf{x}}}
\newcommand{\kk}{\ensuremath{\mathbf{k}}}
\DeclareMathOperator{\supp}{supp}
\DeclareMathOperator{\spn}{span}
\DeclareMathOperator{\sgn}{sign}
\title[Asymptotic greediness
 of the Haar system in $L_{p}$]{Asymptotic greediness
 of the Haar system in the spaces $L_{p}[0,1]$, $1<p<\infty$}
\author[F. Albiac]{Fernando Albiac}
\address{Mathematics Department\\ 
Universidad P\'ublica de Navarra\\
Campus de Arrosad\'{i}a\\
Pamplona\\ 
31006 Spain}
\email{fernando.albiac@unavarra.es}
\author[J. L. Ansorena]{Jos\'e L. Ansorena}
\address{Department of Mathematics and Computer Sciences\\
Universidad de La Rioja\\ 
Logro\~no\\
26004 Spain}
\email{joseluis.ansorena@unirioja.es}
\author[P.\ M.  Bern\'a]{Pablo M. Bern\'a}
\address{Pablo M. Bern\'a\\ Mathematics Department, Universidad Aut\'onoma de Madrid\\ Madrid, 28040 Spain.}
\email{pablo.berna@uam.es}
\subjclass[2010]{46B15, 41A65}
\keywords{Haar basis, greedy basis, unconditional basis, democratic basis, $L_p$ spaces, Lebesgue-type inequalities}
\begin{document}

\begin{abstract} 
Our aim in this paper is to attain a sharp asymptotic estimate for the greedy constant   $C_g[\HH^{(p)},L_p]$ of the (normalized) Haar system $\HH^{(p)}$ in  $L_{p}[0,1]$ for $1<p<\infty$.  We will show that the superdemocracy constant of  $\HH^{(p)}$ in $L_{p}[0,1]$ grows as  $p^{\ast}=\max\{p,p/(p-1)\}$ as $p^*$ goes to $\infty$.  Thus, since the unconditionality constant of $\HH^{(p)}$ in $L_{p}[0,1]$  is $p^*-1$,
 the  well-known  general estimates for the greedy constant of a greedy basis obtained  from the intrinsic features of greediness (namely, democracy and unconditionality) yield that  $p^{\ast}\lesssim C_g[\HH^{(p)},L_p]\lesssim (p^{\ast})^{2}$. Going further, we develop techniques that allow
 us to close the gap between those two bounds,  establishing that, in fact,  $C_g[\HH^{(p)},L_p]\approx p^{\ast}$.

\end{abstract}

\maketitle

\section{Introduction}\label{Introduction}
\noindent
A fundamental and total biorthogonal system for an infinite-dimen\-sional separable Banach space $(\XX, \Vert\cdot\Vert)$ over the field $\FF$ of real or complex scalars, is a family $(\xx_{j},\xx_{j}^{\ast})_{j\in J}$ in $\XX\times \XX^{\ast}$ verifying
\begin{itemize}
\item[(i)] $\XX= \overline{\spn \{ \xx_{j} : j\in J\}}$,
\item[(ii)] $\XX^*= \overline{\spn \{ \xx^*_{j} : j\in J\}}^{w^*}$,
 and
\item[(iii)] $\xx_{j}^{\ast}(\xx_{k})=1$ if $j=k$ and $\xx_{j}^{\ast}(\xx_{k})=0$ otherwise.
\end{itemize}
The family $\BB=(\xx_j)_{j\in J}$ is called a \textit{(Markushevich) basis} and the unequivocally determined collection of bounded linear functionals $\BB^*=(\xx_j^*)_{j\in J}$ is said to be the family of \textit{coordinate functionals} (or \textit{dual basis}) of $\BB$. If the biorthogonal system verifies the condition
\begin{itemize}
\item[(iv)] $\sup_{j\in J} \Vert \xx_j\Vert \Vert \xx^*_j\Vert <\infty$
\end{itemize}
we say that $\BB$ is $M$-\textit{bounded}. Finally, if we have
\begin{itemize}
\item[(v)] $0<\inf_{j\in J} \Vert \xx_j\Vert \le \sup_{j\in J} \Vert \xx_j\Vert <\infty$
\end{itemize}
we say that $\BB$ is \textit{semi-normalized} (\textit{normalized} if $\Vert \xx_j\Vert=1$ for all $j\in J$). Note that a basis $\BB$ verifies simultaneously (iv) and (v) if and only if
\[
\textstyle
\sup_{j \in J} \max\{ \Vert \xx_j\Vert ,\Vert \xx_j^*\Vert \}<\infty.
\]

Suppose $\BB=(\xx_j)_{j\in J}$ is a semi-normalized $M$-bounded basis in a Banach space $\XX$ with coordinate functionals $(\xx_j^*)_{j\in J}$. Then each $f\in \XX$ is uniquely determined by its \textit{coefficient family} $(\xx_j^*(f))_{j\in J}$, which belongs to $c_0(J)$.
Thus, we can consider its non-increasing rearrangement,  which we denote by $(a_m^*[\BB,\XX](f))_{m=1}^\infty$ (or, simply $(a_m^*(f))_{m=1}^\infty$ if the basis and the space are clear from context). 

For each $f\in \XX$ there is a $1-1$ map $\rho\colon \NN\to J$ such that 
\begin{equation}
\label{greedycondition}
|\xx_{\rho(m)}^{\ast}(f)| = a_m^*(f), \quad m\in\NN.
\end{equation}
If the family $(\xx_j^*(f))_{j\in J}$ contains several terms with the same absolute value then the map $\rho$ for $f$ is not uniquely determined. In order to get uniqueness, we arrange the countable set $J$ by means of a bijection  $\sigma\colon J\to\NN$ and impose the additional condition
\begin{equation}
\label{desempate}
\sigma(\rho(m))\le \sigma(\rho(n)) \text{ whenever }\quad |\xx_{\rho(m)}^{\ast}(f)|= |\xx_{\rho(n)}^{\ast}(f)|.
\end{equation}
If $f$ is infinitely supported there is a unique $1-1$ map $\rho\colon\NN\to J$ with $\rho(\NN)=\supp (f)$ that verifies \eqref{greedycondition} and \eqref{desempate}.
In the case when $f$ is finitely supported, there is a unique bijection $\rho\colon\NN\to J$ that verifies \eqref{greedycondition} and \eqref{desempate}.
 In any case, we will refer to such a map $\rho$ as the \textit{greedy ordering} for $f$. For each $m\in \NN$, the \textit{$m$th-greedy approximation}  to $f$ is the partial sum 
 
\[
\GG_{m}[\BB, \XX](f):=\GG_{m}(f) = \sum_{n=1}^{m}\xx_{\rho(n)}^{\ast}(f) \, \xx_{\rho(n)},
\]
where $\rho$ is the greedy ordering for $f$. The sequence $(\GG_m(f))_{m=1}^\infty$ is called the greedy algorithm for $f$ with respect to the basis $\BB$. 

To quantify the efficiency of the greedy algorithm, Temlyakov \cites{Temlyakov2008} introduced 
the sequence $(\Leb_m)_{m=1}^\infty$ of \textit{Lebesgue greedy constants}. For each $m\in\NN$,  $ \Leb_m[\BB,\XX]:=\Leb_m$ 
is the smallest constant  $C$ such that the Lebesgue-type inequality
\begin{equation}\label{LebGreedIneq}
\Vert f-\GG_m(f)\Vert \le C \left\Vert f -\sum_{j\in A} a_j \, \xx_j\right\Vert, \end{equation}
holds for all  $f\in \XX$, all  subsets $A$ of $\mathbb N$ with $|A|=m$,  and all $a_j\in\FF$.
Konyagin and Temlyakov \cite{KonyaginTemlyakov1999} then defined a  basis $\BB$ to be \textit{greedy} if \eqref{LebGreedIneq} holds with a constant $1\le C< \infty$ independent of $m$. The smallest admissible constant $C$ will be  denoted by $C_g[\BB,\XX]=C_g$, and will be referred to as the  \textit{greedy constant}  of the basis. In other words, a basis $\BB$ is greedy if and only if
  $\sup_m \Leb_m=C_g<\infty$, that is, the greedy algorithm provides, up to a multiplicative constant, the best $m$-term approximation to any vector in the space.

Once we know that a certain basis is greedy, a natural problem in approximation theory is to compute, or at least estimate, its greedy constant. Also of interest is to determine for what values of $C$ a basis is $C$-greedy under a suitable renorming of the space.

In this paper we focus on the Haar system  $\HH^{(p)}=(h_I^{(p)})_{I\in\DDD_0}$ in the spaces $L_p[0,1]$ ($L_p$ for short from now on).   Here, $\DDD_0$ denotes the set $\DDD\cup\{0\}$, where $\DDD$ is the collection of all  dyadic intervals contained in $[0,1)$, $h_0^{(p)}$ is the constant function  $1$ on $[0,1)$, and $h_I^{(p)}$ stands for the $L_p$-normalized Haar function supported on $I$, i.e., if $I=[a,b)$ then
\[
h_I^{(p)}(t)=\begin{cases} 
- 2^{-(b-a)/p} &\text{ if } a\le t <(a+b)/2, \\ 
2^{-(b-a)/p} &\text{ if } (a+b)/2\le t <b.
\end{cases}
\]
 Thus $\HH^{(p)}$ is a normalized $M$-bounded basis  for $L_p$  when $1\le p<\infty$ and a normalized $M$-bounded basis  for the space $\DD$ of C\`agl\`ag functions when $p=\infty$.  In either case, the family of coordinate functionals of   $\HH^{(p)}$ is $\HH^{(p')}$
 (with the canonical isometric identification of functions in $L_{p^\prime}$ with
functionals in $(L_p)^*$, where  $p'={p}/{(p-1)}$). In fact, when arranged in the natural way, $\HH^{(p)} $ is a Schauder basis (see \cite{AlbiacKalton2016}*{Proposition 6.1.3}).
  
Temlyakov \cite{Temlyakov1998} showed that $\HH^{(p)}$ is a greedy basis in $L_p$ for $1<p<\infty$.  Later on, Dilworth et al. \cite{DKOSZ2014} proved that for every $C>1$ there is a renorming of $L_p$ with respect to which $\HH^{(p)}$ is $C$-greedy. Whether  or not the isometric constant $C=1$ can be achieved up to renorming (see  \cite{AW2006})  remains  unknown as of today. 

Note that  neither $\HH^{(1)}$ is  a greedy basis for $L_1$ nor $\HH^{(\infty)}$ is  a greedy basis for $\DD$. Indeed, every greedy (or unconditional) basis for a $\LL_1$-space is equivalent to the unit vector basis of $\ell_1$ and every greedy basis for a $\LL_\infty$-space is equivalent to the unit vector basis of $c_0$.
Consequently we have
 \[
 \lim_{p\to 1^+} C_g[\HH^{(p)},L_p]=\infty= \lim_{p\to\infty} C_g[\HH^{(p)},L_p].
 \]
 
 The Haar system  $\HH^{(2)}$ is an orthonormal basis for $L_2$, which easily yields $C_g[\HH^{(2)},L_2]=1$.  However,  for $p\not=2$  it seems hopeless to  attempt to compute the exact value of $C_g[\HH^{(p)},L_p]$. It is therefore natural to address the problem of obtaining asymptotic estimates for $C_g[\HH^{(p)},L_p]$ as $p$ tends to $1$ or to $\infty$. 
 
A standard approach  to estimate the greedy constant of a greedy basis is to make use of its intrinsic properties instead of relying on the mere definition.  The first movers in this direction were Konyagin and Temlyakov \cite{KonyaginTemlyakov1999}, who proved that a basis is greedy if and only if it is \textit{unconditional} and \textit{democratic}.  To set the notation, we recall that a basis $\BB=(\xx_j)_{j\in J}$ for a Banach space $\XX$ is said to be unconditional if the series expansion $\sum_{j\in J} \xx_j^*(f) \, \xx_j$ converges unconditionally to $f$ for every $f\in\XX$. Unconditional bases are characterized as those bases verifying the uniform bound
\begin{equation}\label{SupUncDef}
K_{su}[\BB,\XX]:=\sup_{\substack{A\subset J \\ |A|<\infty}} 
\Vert S_A\Vert
=\sup_{\substack{A\subset J \\  |A|<\infty}} \Vert  \Id_\XX-S_A\Vert
 <\infty,
\end{equation}
or, equivalently,
\begin{equation}\label{LatUncDef}
K_{u}[\BB,\XX]:=\sup
\{ \Vert M_\varepsilon\Vert \colon \varepsilon=(\varepsilon_j)_{j\in J},\, |\varepsilon_j|=1\} <\infty,
\end{equation}
 where $S_A=S_A[\BB,\XX]$ is the coordinate projection on a finite set $A\subseteq J$, i.e.,
\[
\textstyle
S_A\colon \XX \to \XX, \quad f\mapsto\sum_{j\in A} \xx_j^*(f)\, \xx_j,
\]
and
$M_\varepsilon = M_\varepsilon[\BB,\XX]$ is the linear operator from $\XX$ into $\XX$ given by $\xx_j \mapsto \varepsilon_j \,\xx_j$. The {\it suppression unconditional constant} $K_{su}$ and the \textit{lattice unconditional constant} $K_{u}$ of a basis  are related by the inequalities
\[
K_{su}[\BB,\XX]\le K_{u}[\BB,\XX] \le \kappa\, K_{su}[\BB,\XX],
\]
where $\kappa=2$ if  $\FF=\RR$ and $\kappa= 4$  if $\FF=\CC$.
In turn, a basis $\BB$ is said to be \textit{democratic}  if there is $1\le C<\infty$ such that
\begin{equation}\label{DefDemocracy}
\left\Vert \sum_{j\in A} \xx_j \right\Vert \le C \left\Vert \sum_{j\in B} \xx_j \right\Vert, \quad |A|=|B|<\infty.
\end{equation}
We will denote by $\Delta[\BB,\XX]$ the optimal constant $C$ in \eqref{DefDemocracy}.
By imposing  the extra assumption $A\cap B=\emptyset$ in \eqref{DefDemocracy}  we obtain an equivalent definition of democracy,
and $\Delta_{d}[\BB,\XX]$ will denote the optimal constant  in \eqref{DefDemocracy} under the extra assumption on disjointness of sets.

 Amalgamating some steps in Konyagin--Temlyakov's proof (see \cite{DKOSZ2014}*{Equation 1}) 
we get the estimate 
\begin{equation}\label{UncDemEstimate}
\max\{ K_{su}[\BB,\XX], \Delta[\BB,\XX] \} \le C_g[\BB,\XX]\le K_{su}[\BB,\XX] + \Delta_{d}[\BB,\XX] K_u^2[\BB,\XX].
\end{equation}

Note that 
\[\Delta[\BB,\XX]\le \Delta_{d}[\BB,\XX](1+K_{su}[\BB,\XX]).\]
 Hence, when $\BB$ runs over a certain family of bases,
the left-hand side  and the right-hand side terms in inequality \eqref{UncDemEstimate} are of the same order if and  only if the constants $K_u[\BB,\XX]$ are uniformly bounded. This is not the case for   $(\HH^{(p)})_{p>1}$ as the following   theorem of Burkholder exhibits:

\begin{theorem}[\cite{Burk}]\label{UnconditionalEstimate} If $1<p<\infty$ then $K_{u}[\HH^{(p)},L_p]=p^*-1$, where $p^*=\max\{p,p'\}$.
\end{theorem}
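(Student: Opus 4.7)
The strategy is to identify the Haar expansion with a dyadic martingale and recognize $M_\varepsilon$ as a predictable $\pm 1$ martingale transform, then invoke Burkholder's sharp inequality for such transforms. By biorthogonality the dual basis of $\HH^{(p)}$ in $L_p$ is $\HH^{(p')}$ in $L_{p'}$, so $K_u[\HH^{(p)},L_p]=K_u[\HH^{(p')},L_{p'}]$; I may therefore assume $p\ge 2$, where $p^\ast=p$, and aim for the sharp upper bound $p-1$.

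Let $\mathcal{F}_n$ denote the $\sigma$-algebra generated by the dyadic intervals of length $2^{-n}$. For $f=\sum_{I\in\DDD_0} a_I h_I^{(p)}\in L_p$ the conditional expectation $f_n:=\mathbb{E}[f\mid\mathcal{F}_n]$ is a Haar partial sum, and the martingale difference $d_n=f_n-f_{n-1}$ is a linear combination of the $2^{n-1}$ normalized Haar functions of the appropriate generation. Applying $M_\varepsilon$ replaces each $a_I$ by $\varepsilon_I a_I$; since $\sum_I \varepsilon_I\mathbf{1}_I$ (sum over the intervals of the relevant generation) is $\mathcal{F}_{n-1}$-measurable, $g:=M_\varepsilon f$ arises from $f$ by a predictable $\pm 1$ transform of the dyadic martingale $(f_n)$.

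The upper bound $K_u[\HH^{(p)},L_p]\le p-1$ then reduces to Burkholder's martingale transform inequality $\|g\|_p\le(p-1)\|f\|_p$. The canonical proof uses a biconcave majorant: construct $U\colon\RR^2\to\RR$, essentially
\[
U(x,y)=\alpha_p\bigl(|y|-(p-1)|x|\bigr)(|x|+|y|)^{p-1},\qquad \alpha_p=p\bigl(1-\tfrac{1}{p}\bigr)^{p-1},
\]
verifying (a) the majorization $U(x,y)\ge |y|^p-(p-1)^p|x|^p$ on $\RR^2$, and (b) the diagonal concavity
\[
U(x,y)\ge\tfrac12\,U(x+h,y+k)+\tfrac12\,U(x-h,y-k)\quad\text{whenever }|h|=|k|.
\]
Iterating (b) along the successive pairs $(f_n,g_n)$ produces $\mathbb{E}\,U(f_n,g_n)\le U(f_0,g_0)\le 0$, the initial term being non-positive because $|g_0|=|f_0|$ forces $U(f_0,g_0)=\alpha_p(2-p)|f_0|\bigl(2|f_0|\bigr)^{p-1}\le 0$ for $p\ge 2$. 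Then (a) together with Fatou's lemma yields $\|g\|_p\le(p-1)\|f\|_p$.

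The matching lower bound $K_u[\HH^{(p)},L_p]\ge p-1$ is handled by explicit near-extremals: a sequence of finite Haar expansions $f^{(N)}$ supported on a nested chain of dyadic intervals, together with signs $\varepsilon^{(N)}$, designed to mirror the two-point martingale that saturates Burkholder's inequality, so that $\|M_{\varepsilon^{(N)}}f^{(N)}\|_p/\|f^{(N)}\|_p\to p-1$ as $N\to\infty$. The main technical obstacle is verifying the biconcavity (b) of $U$; after splitting into cases by the signs of $x$ and $y$ it reduces to a sharp one-variable inequality for $t\mapsto(1+t)^{p-1}$, and this is precisely where no slack remains — any constant smaller than $p-1$ in (a) would break (b). Engineering the explicit extremals for the lower bound is the secondary delicate point, demanding careful $L_p$-norm bookkeeping across generations of refinement.
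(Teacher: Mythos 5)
The paper does not prove this statement at all: it is quoted verbatim from Burkholder's 1988 paper, so there is no internal argument to compare yours against. Your outline is the canonical route to Burkholder's theorem, and the reductions you make are sound: duality correctly reduces matters to $p\ge 2$; the transformed differences are indeed $e_n=v_nd_n$ with $v_n=\sum_I\varepsilon_I\chi_I$ measurable with respect to $\mathcal{F}_{n-1}$, so $M_\varepsilon$ is a predictable $\pm 1$ martingale transform; the conditional two-point distributions of $(f_n,g_n)$ have increments of the form $(h,\pm h)$, so your midpoint condition (b) is exactly what the induction $\mathbb{E}[U(f_n,g_n)\mid\mathcal{F}_{n-1}]\le U(f_{n-1},g_{n-1})$ requires; and $U(f_0,g_0)\le 0$ because $|g_0|=|f_0|$ and $p\ge 2$. (For a finite Haar sum all the $f_n,g_n$ are simple, so the integrability issues you pass over silently are harmless, and density handles general $f$.)

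As written, however, this is a proof skeleton rather than a proof. The two properties (a) and (b) of $U$, which carry the entire analytic content of the upper bound, are asserted and deferred to ``a sharp one-variable inequality,'' and the lower bound is only described, not constructed. The lower bound deserves particular emphasis: the upper estimate $\Vert M_\varepsilon\Vert\le p^\ast-1$ was already available from Burkholder's earlier work on martingale transforms, and the 1988 paper cited here exists precisely to establish the matching lower bound for the Haar system --- that is Pelczy\'nski's conjecture, and it is the genuinely hard half of the statement. It is also not clear that your proposed ansatz of signs on a single nested chain of dyadic intervals can saturate the constant; the near-extremal martingales for Burkholder's transform inequality branch, and transferring them into the dyadic/Haar setting is exactly the delicate construction of the cited paper. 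Until (a), (b), and an explicit extremal family are actually carried out, the theorem is invoked rather than proved.
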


Now, we may try to obtain estimates that bring us closer to our goal by using super-democracy instead of democracy. A basis is \textit{super-democratic} if there is a constant $1\le C<\infty$ such that 
\begin{equation}\label{SuperDemDefi}
\left\Vert \sum_{j\in A} \varepsilon_j \, \xx_j \right\Vert \le C \left\Vert \sum_{j\in B} \delta_j \, \xx_j \right\Vert, \quad |A|=|B|<\infty, \, |\varepsilon_j|=|\delta_j|=1.
\end{equation}
The smallest admissible constant in \eqref{SuperDemDefi} will be denoted by $\Delta_{s}[\BB,\XX]$, and 
the smallest constant  in \eqref{SuperDemDefi} with the extra assumption $A\cap B=\emptyset$ will be denoted by $\Delta_{sd}[\BB,\XX]$.

Bases that are  unconditional and democratic are super-democratic. Hence, greedy bases are characterized as those that are simultaneously unconditional and super-democratic. Quantitatively, a slight improvement of the argument  used in the proof of \cite{BBG2017}*{Theorem 1.4} gives \begin{equation*}
\max\{ K_{su}[\BB,\XX], \Delta_{sd}[\BB,\XX] \} \le C_g[\BB,\XX]\le K_{su}[\BB,\XX](1+ \Delta_{sd}[\BB,\XX]).
\end{equation*}
 These inequalities allow us to determine the rate of growth of the constants $C_g[\BB,\XX]$ 
when $\BB$ runs over  a family of bases only when $\min\{[K_{su}[\BB,\XX],\Delta_{sd}[\BB,\XX]\}$ is uniformly bounded. But, again, this is not the case for the $L_p$-normalized Haar system for $1<p<\infty$. Indeed, on the one hand we have $\Delta_{sd}[\BB,\XX] \ge \Delta_{d}[\BB,\XX]$ for any $\BB$ and any  $\XX$.  On the other hand,  the following result (which  we shall prove below) yields $\sup_{p>1} \Delta_{d}[\HH^{(p)},L_p]=\infty$.
\begin{proposition}\label{LowerEstimateHaar} If $1<p<\infty$ then
\[
\Delta_{d}[\HH^{(p)},L_p]\ge d_p:= \frac{2^{1/p^\#}-1}{2^{1/p^*}-1},
 \]
 where $p^\#=\min\{p,p'\}.$

\end{proposition}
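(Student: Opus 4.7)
The plan is to exhibit, for each $n$, a pair of disjoint sets $A_n,B_n$ of Haar indices with $|A_n|=|B_n|=n$ whose democracy ratio tends to $d_p$ or to $1/d_p$. Since swapping $A$ and $B$ in the defining inequality of $\Delta_d$ gives
\[
\Delta_d[\HH^{(p)},L_p]\ge\max\{\|f_{A_n}\|_p/\|f_{B_n}\|_p,\ \|f_{B_n}\|_p/\|f_{A_n}\|_p\},
\]
where $f_S:=\sum_{I\in S}h_I^{(p)}$, any such sequence of pairs will suffice.

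The natural candidates are a maximal nested chain versus a family of disjointly supported Haar functions. Concretely, I would take $A_n=\{J_k:0\le k<n\}$ with $J_k=[0,2^{-k})$, and $B_n$ any collection of $n$ pairwise disjoint dyadic subintervals of $[1/2,1)$; then $A_n\cap B_n=\emptyset$ and, by disjointness of supports, $\|f_{B_n}\|_p^p=n$. A level-by-level inspection, with $q:=2^{1/p}$, gives
\[
f_{A_n}(t)=\begin{cases}\dfrac{q^k(q-2)+1}{q-1},&t\in[2^{-(k+1)},2^{-k}),\ 0\le k<n,\\[4pt]-\dfrac{q^n-1}{q-1},&t\in[0,2^{-n}),\end{cases}
\]
because, for $t$ in the $k$-th dyadic shell, $h_{J_k}^{(p)}(t)=+q^k$ (right half of $J_k$), $h_{J_j}^{(p)}(t)=-q^j$ for $j<k$ (left halves of the larger $J_j$), and $h_{J_j}^{(p)}(t)=0$ for $j>k$.

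The crux is to compute the growth of $\|f_{A_n}\|_p^p$. Using the key identity $q^p=2$, the general term
\[
a_k:=2^{-(k+1)}\left|\frac{q^k(q-2)+1}{q-1}\right|^p
\]
converges as $k\to\infty$ to $L:=(2-q)^p/(2(q-1)^p)$, while the remainder $2^{-n}((q^n-1)/(q-1))^p$ stays bounded by $(q-1)^{-p}$. Cesàro averaging then yields $\|f_{A_n}\|_p^p/n\to L$, whence
\[
\lim_{n\to\infty}\frac{\|f_{A_n}\|_p}{\|f_{B_n}\|_p}=L^{1/p}=\frac{2-q}{q(q-1)}.
\]

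To close, a short algebraic check using $2^{1/p'}=2/q$ shows that $(2-q)/(q(q-1))$ equals $d_p$ when $p\ge 2$ and equals $1/d_p$ when $p\le 2$ (the two regimes corresponding to $q\le\sqrt2$ and $q\ge\sqrt2$, respectively). Either way, $\Delta_d[\HH^{(p)},L_p]\ge d_p$. The main technical work is the piecewise evaluation of $f_{A_n}$ together with the Cesàro step; neither is difficult, but both rely crucially on the identity $q^p=2$ to make the $k$-th shell contribute an asymptotically constant amount to $\|f_{A_n}\|_p^p$.
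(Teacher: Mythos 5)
Your proposal is correct and follows essentially the same route as the paper: both compare the nested chain of left-half dyadic intervals $[0,2^{-k})$ against a disjointly supported family of Haar functions, and both reduce the bound to the asymptotics of the chain's $L_p$ norm, arriving at the same limit $\frac{2^{1/p'}-1}{2^{1/p}-1}$. The only (immaterial) difference is that you extract $\Vert f_{A_n}\Vert_p^p/n \to L$ by a Ces\`aro argument on the shell contributions, whereas the paper sandwiches the same quantity via Minkowski's inequality applied to an explicit $\ell_p$ representation.
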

\noindent

Another important property of bases that comes into play in this scenario is the \textit{symmetry for largest coefficients} (a.k.a.\ \textit{Property A}). A basis $\BB$ is said to be symmetric for largest coefficients  
 is there is a constant $1\le C<\infty$ such that
\begin{equation}\label{SSLDefi}
\left\Vert \sum_{j\in A} \varepsilon_j \, \xx_j +f \right\Vert \le C \left\Vert \sum_{j\in B} \delta_j \, \xx_j + f\right\Vert
\end{equation}
whenever
$|A|=|B|<\infty$, $A\cap B=(A\cup B)\cap\supp(f)=\emptyset$, and $ |\xx_i^*(f)| \le |\varepsilon_j|=|\delta_k|=1$ for all $i\in J$, $j\in A$, $k\in B$. 
We denote by $C_a[\BB,\XX]$ the optimal constant $C$ in \eqref{SSLDefi}.
A basis is greedy if and only if it is unconditional and symmetric for largest coefficients. Moreover (see \cite{AA2017bis}*{Remark 2.6}),
 \begin{equation}\label{UncSLCEstimate}
\max\{ K_{su}[\BB,\XX], C_a[\BB,\XX] \} \le C_g[\BB,\XX]\le C_a[\BB,\XX] K_{su}[\BB,\XX].
\end{equation}
These estimates are useful when one wants to show  that the greedy constant of a certain basis is close to $1$. However, since
$
C_{a}[\BB,\XX]\ge \Delta_{sd}[\BB,\XX], 
$
and the side terms of \eqref{UncSLCEstimate} are of different order, they do not provide a tight information about the asymptotic growth of the greedy constants of a  family of bases.

Despite the fact that the  methods  described above are not strong enough to be applied to our problem, in this note we  shall  reach our goal and prove the following theorem.

 \begin{theorem}\label{ABGAHS} 
 $C_g[\HH^{(p)},L_p]\approx p^*$ for $1<p<\infty$.
 \end{theorem}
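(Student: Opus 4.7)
The lower bound $C_g[\HH^{(p)},L_p]\gtrsim p^{*}$ comes essentially for free: the left-hand side of \eqref{UncDemEstimate} gives $C_g\geq K_{su}[\HH^{(p)},L_p]$, and combined with $K_u\le \kappa K_{su}$ and Theorem \ref{UnconditionalEstimate} this yields $C_g\geq (p^{*}-1)/\kappa$. The substance of the theorem is therefore the matching upper bound $C_g[\HH^{(p)},L_p]\lesssim p^{*}$, which, as explained in the introduction, cannot be extracted from the generic inequalities \eqref{UncDemEstimate} or \eqref{UncSLCEstimate}; all of these yield only $(p^{*})^{2}$ once $K_u\approx p^{*}$ is plugged in.

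My plan has three steps. \textbf{Step 1.} Establish the sharp super-democracy estimate $\Delta_{s}[\HH^{(p)},L_p]\lesssim p^{*}$. Burkholder's square-function inequality—applied for both exponents $p$ and $p'$ and hence with constants of order $p^{*}$—reduces the norm of a signed sum $\Vert \sum_{I\in A}\varepsilon_I h_I^{(p)}\Vert_p$ to the \emph{unsigned} quantity $\bigl\Vert (\sum_{I\in A}|h_I^{(p)}|^{2})^{1/2}\bigr\Vert_p$, which does not depend on the $\varepsilon_I$. Thus the super-democracy constant is controlled by the democracy constant, and it suffices to show $\Delta[\HH^{(p)},L_p]\lesssim p^{*}$ by a direct dyadic computation of $\Vert \sum_{I\in A} h_I^{(p)}\Vert_p$ for arbitrary finite $A\subset\DDD_0$. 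Coupled with Proposition \ref{LowerEstimateHaar} this gives $\Delta_{s}[\HH^{(p)},L_p]\approx p^{*}$.

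\textbf{Step 2.} This is the core of the argument: prove a Lebesgue-type inequality for $\HH^{(p)}$ in which the unconditionality and super-democracy constants enter \emph{additively} rather than multiplicatively, so that
\[
\Leb_m[\HH^{(p)},L_p] \lesssim K_{su}[\HH^{(p)},L_p] + \Delta_{s}[\HH^{(p)},L_p] \lesssim p^{*}.
\]
Given $f\in L_p$, let $A=\rho(\{1,\dots,m\})$ be the greedy set and let $g=\sum_{I\in B}a_I h_I^{(p)}$ with $|B|=m$ be a competitor. Writing $A\cup B=(A\setminus B)\sqcup(A\cap B)\sqcup(B\setminus A)$ and using that all coefficients of $f$ on $A\setminus B$ dominate in modulus all coefficients on $B\setminus A$, the standard approach exchanges these two sets via a disjoint super-democratic inequality, paying a factor of $K_{su}$ for one suppression and a factor of $\Delta_{sd}$ for the exchange. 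The refinement I would pursue is to perform the exchange and the suppression \emph{simultaneously} through a single square-function comparison on the tree $\DDD_0$, bounding $\Vert\sum_{I\in A\setminus B}\xx_I^{*}(f)h_I^{(p)}\Vert_p$ by $\Vert(\sum_{I\in A\setminus B}|h_I^{(p)}|^{2})^{1/2}\Vert_p\cdot \max_{I\in A\setminus B}|\xx_I^{*}(f)|$ and then re-expressing the square function on $A\setminus B$ in terms of the one on $B\setminus A$ (equal cardinality, hence comparable square functions up to the democracy factor of order $p^{*}$). Both the upper pass from norm to square function and the lower pass back cost one factor of order $p^{*}$, but these are \emph{different} passes of Burkholder, not the same one squared, so with careful bookkeeping one obtains a single $p^{*}$ instead of $(p^{*})^{2}$.

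\textbf{Main obstacle.} The entire tightness of the theorem lives in Step 2. The naïve splitting yields the product $K_{su}\Delta_{sd}$, and dismantling that product requires an argument specific to the Haar system—exploiting the fact that the square-function norm on a set of dyadic intervals is genuinely comparable across sets of equal size, and that this comparison is uniform in the signs. Technically, the delicate point will be to handle the "common part" $A\cap B$ and the non-extremal coefficients on $B$ without reintroducing a second factor of $K_{su}$. Once this refined Lebesgue-type inequality is in place, the theorem follows immediately by combining Step 1, Step 2, and Theorem \ref{UnconditionalEstimate}.
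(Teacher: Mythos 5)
Your lower bound and your diagnosis of where the difficulty lies are both correct: the whole content of the theorem is an upper bound for $\Leb_m$ in which unconditionality and democracy-type constants enter additively rather than multiplicatively. However, Step~2, which you yourself identify as the core, is not actually carried out, and the mechanism you sketch for it does not close the gap. The assertion that the pass from $\bigl\Vert\sum_{I\in A\setminus B}\xx_I^{*}(f)h_I^{(p)}\bigr\Vert_p$ up to a square function and the pass from a square function back down to $\bigl\Vert\sum_{I\in B\setminus A}\delta_I h_I^{(p)}\bigr\Vert_p$ are ``different passes of Burkholder, not the same one squared, so one obtains a single $p^{*}$'' is not an argument: two successive inequalities each costing a constant of order $p^{*}$ compose to $(p^{*})^{2}$ no matter how different they are, unless you prove that at least one of the two passes costs only $O(1)$ (or that each costs $O(\sqrt{p^{*}})$), which you do not do. The same objection already applies to your Step~1: reducing super-democracy to democracy by applying the square-function equivalence on \emph{both} sides of \eqref{SuperDemDefi} again yields $(p^{*})^{2}$ a priori. (The conclusion $\Delta_{s}[\HH^{(p)},L_p]\lesssim p^{*}$ of Step~1 is nevertheless true, and is obtained in the paper by a direct pointwise estimate: $\bigl\vert\sum_{I\in A}\varepsilon_I h_I^{(p)}(t)\bigr\vert\le a_p\sum_{J\in A}2^{n(J)/p}\chi_{R_J}(t)$ for a suitable partition $(R_J)$, giving $\varphi^{\epsilon}_m[\HH^{(p)},L_p]\le 2a_p m^{1/p}$ with $a_p=1/(1-2^{-1/p})$.) Finally, your ``main obstacle'' paragraph explicitly leaves unresolved how to treat $A\cap B$ and the non-extremal coefficients without a second factor of $K_{su}$; that is precisely the point at which a proof is required rather than a plan.

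The paper's route is genuinely different and avoids square functions entirely: the additive inequality is $\Leb_m\le \kk^c_{2m}+\Bid_m$ (Theorem~\ref{p2}), where $\Bid_m=\sup_{r\le m}\varphi^{\epsilon}_r\varphi^{\epsilon,*}_r/r$ is the (super) bi-democracy sequence. The decisive step is Lemma~\ref{p1}: testing $g$ against the dual vector $f^{*}=\sum_{j\in G}\overline{\sgn\xx_j^{*}(g)}\,\xx_j^{*}$ gives $a_r^{*}(g)\,\varphi^{\epsilon}_r\le \Bid_r\Vert g\Vert$ directly by duality, with no appeal to unconditionality or quasi-greediness, so the term $\Vert S_{A\setminus G}(f)\Vert$ costs only $\Bid_m\Vert g\Vert$ while the term $\Vert g-S_{A\cup G}(g)\Vert$ costs only $\kk^c_{2m}\Vert g\Vert$. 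What then makes the bound come out to $p^{*}$ rather than $(p^{*})^{2}$ is the quantitative asymmetry $\varphi^{\epsilon}_m[\HH^{(p)},L_p]\le 2a_pm^{1/p}$ and $\varphi^{\epsilon,*}_m\le 2a_{p'}m^{1/p'}$ with $a_pa_{p'}\approx p^{*}$, since one of the two factors stays bounded as $p^{*}\to\infty$ (Proposition~\ref{UpperEstimateHaar}). If you want to salvage your plan, the piece you must supply is exactly this duality-based substitute for the exchange-and-suppress step; the square-function comparison, as described, will not produce it.
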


 The key idea in the proof of Theorem~\ref{ABGAHS} will consist of taking advantage of the fact that the Haar system in $L_p$ belongs to a more demanding class of bases than that of greedy bases, namely, the class of \textit{bi-greedy} bases. A basis is said to be bi-greedy if both the basis itself and its dual basis are greedy. Bi-greedy bases were characterized in \cite{DKKT2003} as those bases that are unconditional and \textit{bi-democratic}. Recall that a basis $\BB=(\xx_j)_{j=1}^\infty$ is said to be bi-democratic if there is a constant $1\le C<\infty$ such that 
  \begin{equation}\label{BiDemDefi}
 \left\Vert \sum_{j\in A} \xx_j\right\Vert \, \left\Vert \sum_{k\in B} \xx_k^*\right\Vert \le C m, \quad |A|=|B|=m.
 \end{equation}
We will denote by $\Delta_{b}[\BB,\XX]$ 
the smallest constant $C$ such that \eqref{BiDemDefi} holds, and we will refer to it
 as the bi-democratic constant of the basis. 

 The following new estimate for the greedy constant will also be crucial in the proof of Theorem~\ref{ABGAHS}.
\begin{theorem}\label{BidemocracyEstimate}Let $\BB$ be a bi-democratic and unconditional basis for a Banach space $\XX$.
Then
\[
C_g[\BB,\XX]\le K_{su}[\BB,\XX] + \kappa^2 \, \Delta_{b}[\BB,\XX].
\]
\end{theorem}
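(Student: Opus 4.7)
The plan is to fix $f \in \XX$, $m \in \NN$, a set $A \subseteq J$ with $|A|=m$, and scalars $(a_j)_{j\in A}$; writing $h = f - \sum_{j\in A}a_j\xx_j$ and letting $B$ denote the greedy set of $f$ of size $m$ (so $\GG_m(f) = S_B(f)$), the objective is to prove $\|f - S_B(f)\| \le (K_{su} + \kappa^2 \Delta_b) \|h\|$.

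The starting point is the identity
\[
f - S_B(f) = u + (\Id - S_{A \cup B})(h), \qquad u := \sum_{j \in A \setminus B}\xx_j^*(f)\,\xx_j,
\]
which follows by splitting $J \setminus B = (A \setminus B) \sqcup (J \setminus (A \cup B))$ and using the identity $\xx_j^*(h) = \xx_j^*(f)$ for $j \notin A$. The term $(\Id - S_{A\cup B})(h)$ is a coordinate projection of $h$ onto the complement of $A \cup B$, so suppression unconditionality gives $\|(\Id - S_{A\cup B})(h)\| \le K_{su}\|h\|$, producing the first summand of the theorem.

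The heart of the argument is the estimate $\|u\| \le \kappa^2 \Delta_b \|h\|$, which rests on two observations. Set $A' := A\setminus B$, $B' := B\setminus A$ (so $|A'|=|B'|$), and $\alpha := \min_{k\in B}|\xx_k^*(f)|$. First, for $j \in A'$ one has $|\xx_j^*(f)| \le \alpha$ since $j$ is excluded from the greedy set. Second, for $k \in B'$ one has $\xx_k^*(h) = \xx_k^*(f)$ (because $k \notin A$) with $|\xx_k^*(h)| \ge \alpha$, so $\sum_{k \in B'}|\xx_k^*(h)| \ge \alpha|A'|$. The first observation combined with a convex-hull / scaling argument gives $\|u\| \le \alpha \cdot \sup_{|\varepsilon_j|=1}\|\sum_{j\in A'}\varepsilon_j\xx_j\|$; the second, tested against the functional $\psi := \sum_{k\in B'} \overline{\sgn(\xx_k^*(h))}\,\xx_k^*$, yields $\alpha|A'| \le \psi(h) \le \|\psi\|\|h\|$. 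The bi-democracy inequality $\|\sum_{j\in A'}\xx_j\|\,\|\sum_{k\in B'}\xx_k^*\| \le \Delta_b|A'|$ is then the bridge between the two estimates.

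The main obstacle is the sign accounting: the passage from signed to unsigned sums on each side must cost only $\kappa$ rather than the full lattice-unconditional constant $K_u$, since a blunt use of lattice unconditionality would yield $K_u^2 \Delta_b$ in place of $\kappa^2 \Delta_b$. The way to circumvent this is to split each unit-modulus signed sum into at most $\kappa$ unsigned sub-sums indexed by subsets (positive and negative parts when $\FF=\RR$; additionally real and imaginary parts when $\FF=\CC$), to invoke bi-democracy for matching unsigned pieces, and to track the combinatorial bookkeeping so that only the factor $\kappa$ per side survives. Multiplying the two $\kappa$'s gives the claimed $\kappa^2 \Delta_b$, and combining with the suppression bound from the first step yields the theorem.
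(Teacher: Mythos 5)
Your proposal is correct and follows essentially the same route as the paper: the identity $f-\GG_m(f)=S_{A\setminus B}(f)+(\Id-S_{A\cup B})(h)$, the suppression bound on the second term, and the estimate of $S_{A\setminus B}(f)$ by pairing the coefficient bound $\alpha\,\varphi^{\epsilon}_{|A'|}$ against the dual functional $\psi$ supported on $B\setminus A$ are precisely the paper's Theorem~3 (via its Lemma~2), and your final sign-splitting step is the paper's inequality $\Delta_{sb}\le\kappa^{2}\Delta_{b}$. The only cosmetic difference is that the paper records the intermediate, $m$-dependent inequality $\Leb_m\le \kk^{c}_{2m}+\Bid_m$ before taking suprema, whereas you pass directly to the uniform constants.
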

Section~\ref{BidemocraticSection} is devoted to proving Theorem~\ref{BidemocracyEstimate}, while in Section~\ref{HaarEstimatesSection} we obtain the remaining estimates that in combination with Theorem~\ref{BidemocracyEstimate} will yield Theorem~\ref{ABGAHS}.

Throughout this article we use standard facts and notation from Banach spaces and approximation theory. Here, and throughout this paper, the symbol $\alpha_i\lesssim \beta_i$ for $i\in I$ means 
that the families of positive real numbers $(\alpha_i)_{i\in I}$ and $(\beta_i)_{i\in I}$ verify $\sup_{i\in I}\alpha_i/\beta_i <\infty$. If $\alpha_i\lesssim \beta_i$ and $\beta_i\lesssim \alpha_i$ for $i\in I$ we say $(\alpha_i)_{i\in I}$ are $(\beta_i)_{i\in I}$ are equivalent, and we write $\alpha_i\approx \beta_i$ for $i\in I$.
We refer the reader to  \cites{AlbiacKalton2016} for the necessary background. 

\section{A new estimate for the Lebesgue type constants for the greedy algorithm using bi-democracy}
\label{BidemocraticSection}
\noindent
The \textit{fundamental function} of a basis $\BB=(\xx_j)_{j=1}^\infty$ in a Banach space $\XX$ is the sequence given by
\[
\varphi_m[\BB,\XX]=\sup_{|A|\le m}\left\Vert \sum_{j\in A} \xx_j\right\Vert, \quad m\in\NN.
\]
We shall also consider the \textit{super-fundamental function} of the basis, given by
\[
\varphi^{\epsilon}_m[\BB,\XX]=\varphi^{\epsilon}_m:= \sup_{\substack{|A|=m\\ |\varepsilon_j|=1}}\left\Vert \sum_{j\in A} \varepsilon_j \xx_j\right\Vert, \quad m\in\NN.
\]
A standard convexity argument yields
\begin{equation}\label{eq:FundamentalFunction}
\varphi_m[\BB,\XX]
\le \sup_{\substack{|A|=m\\ |a_j|\le 1}}\left\Vert \sum_{j\in A} a_j \xx_j\right\Vert
=\varphi^{\epsilon}_m[\BB,\XX]
\le \kappa \, \varphi_m[\BB,\XX].
\end{equation}

If $\YY$ is the subspace of $\XX^*$ spanned by $\BB^*$, we set
\[
\varphi^{\epsilon,*}_m[\BB,\XX]=\varphi^{\epsilon,*}_m:=\varphi^{\epsilon}_m[\BB^*,\YY], \qquad m\in \NN,\] and define the sequence
\[
\Bid_m[\BB,\XX]=\Bid_m
:=\sup_{r\le m} \frac{\varphi^{\epsilon}_r[\BB,\XX] \, \varphi^{\epsilon,*}_r[\BB,\XX] }{r}.
 \]
Then a basis $\BB$ is bi-democratic if and only 
\begin{equation}\label{CSB}
\Delta_{sb}[\BB,\XX]:=\sup_m \Bid_m<\infty.
\end{equation} Quantitatively we have
 \begin{equation} \label{BidemocracyConstantsRelation}
\Delta_{b}[\BB,\XX]\le \Delta_{sb}[\BB,\XX]\le \kappa^2 \, \Delta_{b}[\BB,\XX].
\end{equation}
\begin{remark}\label{FromBiDemToDem}The identity
 \[
 \left(\sum_{j\in A}\xx_j^*\right)\left(\sum_{j\in A}\xx_j\right)=|A|, \quad A\subseteq J, \, |A|<\infty
 \] 
 yields (see \cite{DKKT2003})
 \[
 \Delta[\BB,\XX]\le \Delta_{b}[\BB,\XX]\]  Similarly, the identity  
 \[
 \left(\sum_{j\in A}\varepsilon_n \, \xx_j^*\right)\left(\sum_{j\in A} \overline{\varepsilon_n}\, \xx_j\right)=|A|,  \quad A\subseteq J, \, |A|<\infty, \, |\varepsilon_j|=1
 \]
gives 
\[\Delta_{s}[\BB,\XX]\le \Delta_{sb}[\BB,\XX].\]
\end{remark}

Garrig\'os et al.\ \cite{GHO2013} gave several estimates for the constants $\Leb_m[\BB,\XX]$ involving the sequences of democracy constants and of conditionality constants of the basis. Subsequently, Bern\'a et al. \cite{BBG2017} obtained estimates for $\Leb_m[\BB,\XX]$ that also involved the sequence of quasi-greedy constants of the basis. The estimate we shall provide involves the sequence of bi-democracy constants and the sequence of conditionality constants $(\kk_m^c)_{m=1}^\infty$ given by
\[
\textstyle
\kk_m^c[\BB,\XX]=\kk_m^c:=\sup_{|A|\le m}\Vert \Id_\XX-S_A[\BB,\XX]\Vert.
\]
Note that the basis $\BB$ is unconditional if and only if 
\[\sup_m \kk_m^c=K_{su}[\BB,\XX]<\infty.\]
\begin{lemma}\label{p1}
Let $\BB=(\xx_j)_{j\in J}$ be a semi-normalized M-bounded basis in a Banach space $\XX$.
For every $f\in \XX$ and $m\in\NN$,
\[
a_m^*[\BB,\XX] (f) \, \varphi^{\epsilon}_m[\BB,\XX] \leq \Bid_m[\BB,\XX] \, \Vert f\Vert.
\]

\end{lemma}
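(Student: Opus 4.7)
My plan is to exploit the duality between $\BB$ and $\BB^*$ via the dual functional whose action on $f$ picks out the largest $m$ coefficients. The key observation is that $\Bid_m$ packages exactly the product $\varphi^{\epsilon}_m \cdot \varphi^{\epsilon,*}_m / m$, so if I can bound $a_m^*(f) \cdot m \le \varphi^{\epsilon,*}_m \Vert f\Vert$, multiplying by $\varphi^{\epsilon}_m/m$ and using the trivial bound $\varphi^{\epsilon}_m \varphi^{\epsilon,*}_m / m \le \Bid_m$ gives the result.

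Concretely, I would argue as follows. We may assume $a:=a_m^*[\BB,\XX](f) > 0$, else the inequality is trivial. By the definition of the non-increasing rearrangement there is a set $A\subseteq J$ with $|A|=m$ such that $|\xx_j^*(f)| \ge a$ for every $j\in A$. Pick scalars $(\varepsilon_j)_{j\in A}$ with $|\varepsilon_j|=1$ chosen so that $\varepsilon_j \xx_j^*(f) = |\xx_j^*(f)|$ for $j\in A$ (i.e., $\varepsilon_j$ is the conjugate of the sign of $\xx_j^*(f)$), and form the functional
\[
g := \sum_{j\in A} \varepsilon_j \,\xx_j^* \in \YY \subseteq \XX^*.
\]
Then by construction
\[
g(f) = \sum_{j\in A} |\xx_j^*(f)| \ge m\,a,
\]
while by the definition of $\varphi^{\epsilon,*}_m$ we have $\Vert g\Vert_{\XX^*} = \Vert g\Vert_{\YY} \le \varphi^{\epsilon,*}_m[\BB,\XX]$. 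Pairing these with $|g(f)|\le \Vert g\Vert\,\Vert f\Vert$ gives
\[
m\,a \le \varphi^{\epsilon,*}_m[\BB,\XX] \,\Vert f\Vert.
\]

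Finally, multiplying both sides by $\varphi^{\epsilon}_m[\BB,\XX]/m$ and invoking the bound
\[
\frac{\varphi^{\epsilon}_m[\BB,\XX]\,\varphi^{\epsilon,*}_m[\BB,\XX]}{m} \le \sup_{r\le m} \frac{\varphi^{\epsilon}_r[\BB,\XX]\,\varphi^{\epsilon,*}_r[\BB,\XX]}{r} = \Bid_m[\BB,\XX],
\]
which is just the definition of $\Bid_m$ applied at $r=m$, yields the claimed inequality. I do not foresee a genuine obstacle here: the only mild subtlety is checking that $\Vert g\Vert_{\YY}$ (the quantity bounded by $\varphi^{\epsilon,*}_m$) coincides with $\Vert g\Vert_{\XX^*}$, which is immediate from the fact that $\YY$ inherits its norm from $\XX^*$. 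The argument is essentially a one-line application of Hahn--Banach-free duality, with $\Bid_m$ doing all the bookkeeping.
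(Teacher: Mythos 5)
Your proof is correct and is essentially the same as the paper's: both construct the dual functional $\sum_{j\in A}\overline{\sgn \xx_j^*(f)}\,\xx_j^*$ supported on $m$ large coefficients, pair it with $f$ to get $m\,a_m^*(f)\le \varphi^{\epsilon,*}_m\Vert f\Vert$, and then convert $\varphi^{\epsilon,*}_m$ into $\Bid_m/\varphi^{\epsilon}_m$ via the definition of $\Bid_m$ at $r=m$. The only difference is cosmetic ordering of the inequalities.
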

\begin{proof}
Let $G\subseteq J$ be such that $|G|=m$ and $a_m^*\le |\xx_j^*(f)|$ for all $j\in G$.
Define $f^*\in \XX^*$ by 
$
f^*=\sum_{j\in G} \overline{\sgn \xx_j^*(f)} \, \xx_j^*.
$
Then
\[
a_m^*(f) \varphi^{\epsilon}_m
 \leq \Bid_m \dfrac{m \, a_m^*(f)}{\varphi_m^{\epsilon,*}}
\leq 
 \Bid_m \dfrac{\sum_{j\in G} |\xx_j^*(f)|}{\Vert f^* \Vert}
 =
 \Bid_m \dfrac{f^*(f)}{\Vert f^* \Vert}
 \leq \Bid_m\Vert f\Vert.
\]

\end{proof}

\begin{theorem}\label{p2}
Let $\BB=(\xx_j)_{j\in J}$ be a semi-normalized M-bounded basis in a Banach space $\XX$. For all $m\in\NN$ we have
\[
\Leb_m[\BB,\XX] \leq \kk^c_{2m}[\BB,\XX] + \Bid_m[\BB,\XX].
\]
\end{theorem}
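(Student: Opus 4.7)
My plan is to fix $f\in\XX$ together with $A\subseteq J$ of size $m$ and a candidate approximant $g=\sum_{j\in A}a_j\xx_j$, set $h=f-g$, and denote by $B$ the greedy set of $f$ (so $|B|=m$ and $\GG_m(f)=S_B(f)$). The starting point will be the decomposition
\[
f-\GG_m(f)=(\Id-S_B)(f)=(\Id-S_{A\cup B})(f)+S_{(A\cup B)\setminus B}(f)=(\Id-S_{A\cup B})(h)+S_{A\setminus B}(f),
\]
where I have used that $\supp(g)\subseteq A\subseteq A\cup B$ to replace $f$ by $h$ in the first summand, and the disjointness $(A\cup B)\setminus B=A\setminus B$. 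The first summand is handled immediately: since $|A\cup B|\le 2m$, the definition of $\kk^c_{2m}$ gives $\|(\Id-S_{A\cup B})(h)\|\le \kk^c_{2m}\,\|h\|$, which accounts for the first term of the claimed bound.

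The real work lies in showing $\|S_{A\setminus B}(f)\|\le \Bid_m\,\|h\|$. Set $F=A\setminus B$, $D=B\setminus A$, $k=|F|=|D|=m-|A\cap B|$, and $\alpha=a_m^*(f)$. For $j\in F$ we have $j\notin B$, so $|\xx_j^*(f)|\le\alpha$, and consequently by \eqref{eq:FundamentalFunction}
\[
\|S_F(f)\|\le \alpha\,\varphi^\epsilon_k.
\]
The definition of $\Bid_m$ gives $\varphi^\epsilon_k\,\varphi^{\epsilon,*}_k\le k\,\Bid_m$, so the proof reduces to establishing the inequality
\[
k\alpha\le \varphi^{\epsilon,*}_k\,\|h\|.
\]

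This last inequality is the crux of the argument and will be proved by the same duality trick as in Lemma~\ref{p1}, but with a judicious choice of support set. I take
\[
\psi=\sum_{j\in D}\overline{\sgn\xx_j^*(f)}\,\xx_j^*,
\]
so that $\|\psi\|\le\varphi^{\epsilon,*}_k$ by definition. The key point is that $D\subseteq B$ forces $|\xx_j^*(f)|\ge\alpha$ for every $j\in D$, hence $\psi(f)=\sum_{j\in D}|\xx_j^*(f)|\ge k\alpha$; while $D\cap A=\emptyset$ together with $\supp(g)\subseteq A$ forces $\psi(g)=0$, so $\psi(h)=\psi(f)\ge k\alpha$. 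Combined with $|\psi(h)|\le\|\psi\|\,\|h\|\le\varphi^{\epsilon,*}_k\,\|h\|$, this yields $k\alpha\le\varphi^{\epsilon,*}_k\,\|h\|$, and hence $\|S_F(f)\|\le\Bid_m\,\|h\|$. Summing the two estimates gives $\|f-\GG_m(f)\|\le(\kk^c_{2m}+\Bid_m)\|h\|$, and since $(A,g)$ was arbitrary this is the claimed bound on $\Leb_m$.

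The main obstacle I anticipate is precisely the selection of the support of the dual witness $\psi$: a naive choice such as $B$ itself would leave a spurious term $\psi(g)$ on $A\cap B$ that is not controlled by $\|h\|$. Placing $\psi$ on $D=B\setminus A$ is what makes the argument work, since this set simultaneously inherits the lower bound $|\xx_j^*(f)|\ge\alpha$ from $B$ and the annihilation property $\psi(g)=0$ from being disjoint from $A$; it is also crucial that $|D|=|F|$ so that the primal-side bound $\|S_F(f)\|\le\alpha\varphi^\epsilon_k$ pairs exactly with the dual-side bound of order $k/\varphi^{\epsilon,*}_k$ through the bi-democracy constant $\Bid_m$.
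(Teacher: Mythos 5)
Your proof is correct and follows essentially the same route as the paper's: the same decomposition $f-\GG_m(f)=(\Id-S_{A\cup B})(h)+S_{A\setminus B}(f)$, the same $\kk^c_{2m}$ bound for the first term, and the same bi-democracy duality trick for the second. The only cosmetic difference is that the paper funnels the duality step through its Lemma~\ref{p1} applied to $h$ (via the chain $\max_{j\in A\setminus B}|\xx_j^*(f)|\le\min_{j\in B\setminus A}|\xx_j^*(h)|\le a_k^*(h)$), whereas you inline that lemma by placing the dual witness directly on $B\setminus A$ and using its annihilation of the approximant; both yield the identical bound.
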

\begin{proof}
Let $f\in \XX$, $m\in\NN$ and $G\subseteq J$ of cardinality $m$ such that
$\GG_m(f)=S_G(f)$. Let $A\subseteq J$ with $|A|=m$ and $(a_j)_{j\in A}\in\FF^A$. Put $g=f-\sum_{j\in A} a_j\, \xx_j$.
We have
\[
\Vert f-\GG_m(f)\Vert 
=\Vert g-S_{A\cup G}(g)+S_{A\setminus G}(f)\Vert
\le \Vert g-S_{A\cup G}(g)\Vert +\Vert S_{A\setminus G}(f)\Vert.
\]
Since $|A\cup B|\le 2 m$,
\[\Vert g-S_{A\cup G}(g)\Vert\le \kk_{2m}^c\, \Vert g\Vert.\]
Let $r=|A\setminus G|=|G\setminus A|$. Invoking \eqref{eq:FundamentalFunction} and Lemma~\ref{p1},
\begin{align*}
\Vert S_{A\setminus G}(f)\Vert 
&\leq
\max_{j\in A\setminus G}\vert \xx_j^*(f)\vert \varphi^{\varepsilon}_r\\
&\leq
\min_{j\in G\setminus A}\vert \xx_j^*(f)\vert \varphi^{\varepsilon}_r\\
&=\min_{j\in G\setminus A}\vert \xx_j^*(g)\vert \varphi^{\varepsilon}_r\\
&\le a_r^*(g) \varphi^{\varepsilon}_r\\
&\le \Bid_r \, \Vert g\Vert.
\end{align*}
 Taking into account that $r\le m$ and that $(\Bid_m)_{m=1}^\infty$ is non-decreasing, we get 
 \[
 \Vert S_{A\setminus G}(f)\Vert \le \Bid_m \, \Vert g\Vert.\]
Combining, we obtain the desired result.
\end{proof}

\begin{proof}[Proof of Theorem~\ref{BidemocracyEstimate}] Taking the supremum over $m$ in Theorem~\ref{p2}  and appealing to
\eqref{BidemocracyConstantsRelation} gives
$$
C_g[\BB,\XX]\le K_{su}[\BB,\XX]+\Delta_{sb}[\BB,\XX]\le K_{su}[\BB,\XX]+\kappa^2\, \Delta_{b}[\BB,\XX].
$$
 
\end{proof}

\section{Estimates for the Haar basis in $L_p$}\label{HaarEstimatesSection}
\noindent
We start this section with the proof   advertised in Section~\ref{Introduction} of the lower estimate for the democracy constant.
\begin{proof}[Proof of Proposition~\ref{LowerEstimateHaar}]
If $(J_j)_{j=1}^m$ are disjointly supported intervals in $\DDD$ we have
\begin{equation}\label{Estimate1}
\left\Vert \sum_{j=1}^m h_{J_j}^{(p)}\right\Vert_p=m^{1/p}.
\end{equation}
Let $(I_j)_{j=1}^\infty$ be the sequence in $\DDD$ defined recursively as follows:
$I_1=[0,1)$ and
$I_{j+1}$ is the left half of $I_j$.
Set $q=p'$.Then
\begin{align*}
\left\Vert \sum_{j=1}^m h_{I_j}^{(p)}\right\Vert_p^p
&=2^{-m-1}\left|\sum_{k=0}^{m} 2^{k/p}\right|^p + \sum_{j=0}^{m-1}2^{-j-1}\left| 2^{j/p}-\sum_{k=0}^{j-1} 2^{k/p}\right|^p\\
&=\frac{(1-2^{-(m+1)/p})^p+\sum_{j=0}^{m-1} |- 2^{-(j+1)/p}+(2^{1/q}-1)|^p}{(2^{1/p}-1)^p}\\
&=\frac{\Vert f-g\Vert_p^p}{(2^{1/p}-1)^p},
\end{align*}
where
\[
 g=( 2^{-(j+1)/p})_{j=0}^m, \quad f=(\underbrace{2^{1/q}-1, \dots , 2^{1/q}-1}_{m \text{ times}} ,1).
\]
We have $\Vert g\Vert_p\le 1$ and $\Vert f\Vert_p=(1+m (2^{1/q}-1)^p)^{1/p}$.
 Hence, by Minkowski's inequality,
\begin{equation}\label{Estimate2}
 \frac{(1+m (2^{1/q}-1)^p)^{1/p} - 1}{2^{1/p}-1}
\le \left\Vert \sum_{j=1}^m h_{I_j}^{(p)}\right\Vert_p
\le \frac{(1+m (2^{1/q}-1)^p)^{1/p} + 1}{2^{1/p}-1}.
\end{equation}
Comparing \eqref{Estimate1} with \eqref{Estimate2}, and letting $m$ tend to $\infty$, we get
\[
\Delta_{d}[\HH^{(p)},L_p] \ge \max\left\{ \frac{2^{1/q}-1}{2^{1/p}-1}, \frac{2^{1/p}-1}{2^{1/q}-1}\right\},
\]
as desired.
\end{proof}

Next we establish the upper estimate  for the super-bi-democracy constants  that we will need.
\begin{proposition}\label{UpperEstimateHaar} If $1<p<\infty$ then
\begin{equation}\label{Dp}
\Delta_{sb}[\HH^{(p)},L_p]\le D_p:= \frac{8}{(2^{1/p}-1)(2^{1/p'}-1)}.
\end{equation}
\end{proposition}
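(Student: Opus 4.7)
The plan is to control $\Delta_{sb}[\HH^{(p)},L_p]=\sup_m \Bid_m$ through separate upper bounds on the super-fundamental function $\varphi^{\epsilon}_m[\HH^{(p)},L_p]$ and its dual counterpart $\varphi^{\epsilon,*}_m[\HH^{(p)},L_p]$. Since the coordinate functionals of $\HH^{(p)}$ in $L_p$ are canonically identified with $\HH^{(p')}$ in $L_{p'}$, the latter equals $\varphi^{\epsilon}_m[\HH^{(p')},L_{p'}]$, so a single one-sided estimate handles both. Specifically, I would prove
\[
\varphi^{\epsilon}_m[\HH^{(p)},L_p] \le \frac{2^{1+1/p}}{2^{1/p}-1}\, m^{1/p}, \qquad m \in \NN,
\]
and multiply the $p$ and $p'$ versions (using $1/p+1/p'=1$, so $2^{1/p}\cdot 2^{1/p'}=2$) to obtain
\[
\frac{\varphi^{\epsilon}_m\,\varphi^{\epsilon,*}_m}{m}\le \frac{8}{(2^{1/p}-1)(2^{1/p'}-1)}=D_p
\]
for every $m$, whence $\Delta_{sb}[\HH^{(p)},L_p]\le D_p$.

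For the key pointwise estimate, I would fix $A\subseteq \DDD_0$ with $|A|=m$ and signs $(\varepsilon_I)_{I\in A}$ of modulus one, and put $f=\sum_{I\in A}\varepsilon_I h_I^{(p)}$. For each $x\in[0,1)$, the elements of $A\cap\DDD$ whose supports contain $x$ correspond to a chain of dyadic intervals $I_1\supsetneq I_2\supsetneq\cdots\supsetneq I_{k(x)}$ whose levels $\tau(I):=-\log_2|I|$ form a strictly increasing sequence of non-negative integers. Using $|h_I^{(p)}(x)|=2^{\tau(I)/p}$ on $I$, the triangle inequality, and the forced gap $\tau(I_j)\le \tau_*(x)-(k(x)-j)$ (where $\tau_*(x):=\tau(I_{k(x)})$ is the finest level appearing), a geometric sum gives
\[
|f(x)|\le \mathbf{1}_{\{0\in A\}} + \frac{2^{1/p}}{2^{1/p}-1}\, 2^{\tau_*(x)/p}.
\]

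To pass to $L_p$, I would raise to the $p$th power via $(a+b)^p\le 2^{p-1}(a^p+b^p)$ and integrate. The minimal elements of $A\cap\DDD$ under $\subseteq$ (the \emph{leaves}) are pairwise disjoint, and $\tau_*(x)=\tau(J)$ whenever $x$ lies in a leaf $J$; therefore
\[
\int_0^1 2^{\tau_*(x)}\,dx = \sum_{J\text{ leaf}} 2^{\tau(J)}|J| = |\text{leaves of }A\cap\DDD|\le m.
\]
Combining, and using $m\ge 1$ together with $(2^{1/p}-1)^p\le 1$ to absorb the additive term from $h_0^{(p)}$ into the main one, yields the claimed bound on $\varphi^{\epsilon}_m$.

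The main subtlety lies in the pointwise chain estimate: the clean denominator $2^{1/p}-1$ appears precisely because consecutive dyadic levels differ by one (so the amplitudes $2^{\tau/p}$ along a chain form a geometric progression with common ratio at least $2^{1/p}$), and the ensuing $L_p$ integral collapses to a leaf count because the measures $|J|=2^{-\tau(J)}$ cancel the exponential weights $2^{\tau(J)}$. The extra factor $8$ (versus the $2$ that would emerge without $h_0^{(p)}$) is exactly the cost of absorbing the constant-function contribution through the convexity inequality $(a+b)^p\le 2^{p-1}(a^p+b^p)$.
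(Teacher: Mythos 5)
Your overall strategy---a pointwise geometric-series bound along the dyadic chain through each point, giving $\varphi^{\epsilon}_m[\HH^{(p)},L_p]\le 2a_p\, m^{1/p}$ with $a_p=2^{1/p}/(2^{1/p}-1)$, and then multiplying the $p$ and $p'$ estimates to get $4a_pa_{p'}=D_p$---is exactly the paper's proof. The one step that fails as written is the identity
\[
\int_0^1 2^{\tau_*(x)}\,dx=\sum_{J\ \text{leaf}}2^{\tau(J)}\,|J|=|\{\text{leaves}\}|.
\]
The function $\tau_*$ lives on all of $K=\bigcup_{I\in A}I$, and the leaves need not cover $K$. For $A=\{[0,1),[0,\tfrac12)\}$ the only leaf is $[0,\tfrac12)$, so your right-hand side equals $1$, whereas $\tau_*\equiv 1$ on $[0,\tfrac12)$ and $\tau_*\equiv 0$ on $[\tfrac12,1)$, giving $\int 2^{\tau_*}=2\cdot\tfrac12+1\cdot\tfrac12=\tfrac32$. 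So the asserted equality is false; the leaf sum undercounts the integral because points covered only by non-minimal elements of $A$ are dropped.

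The inequality you actually need, $\int_0^1 2^{\tau_*(x)}\,dx\le m$, is nonetheless true, and the repair is the partition the paper uses: for each $I\in A$ let $R_I$ be the set of $x\in I$ for which $I$ is the \emph{finest} element of $A$ containing $x$. The sets $(R_I)_{I\in A}$ (not just those indexed by leaves) partition $K$, one has $\tau_*=\tau(I)$ on $R_I$, and $|R_I|\le|I|=2^{-\tau(I)}$, whence $\int_0^1 2^{\tau_*}=\sum_{I\in A}2^{\tau(I)}|R_I|\le|A|\le m$. With that one-line correction the rest of your argument---the convexity step $(a+b)^p\le 2^{p-1}(a^p+b^p)$ to absorb the $h_0^{(p)}$ term (where the paper instead uses the cruder $\max\{a_pm^{1/p},\,1+a_p(m-1)^{1/p}\}\le 2a_pm^{1/p}$, arriving at the same factor $2$), the duality identity $\varphi^{\epsilon,*}_m[\HH^{(p)},L_p]=\varphi^{\epsilon}_m[\HH^{(p')},L_{p'}]$, and the final product---goes through and reproduces the constant $D_p$.
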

\begin{proof}
For $I\in\DDD$ let $n(I)$ be such that $|I|=2^{-n(I)}$.
Let $A\subseteq \DDD$ finite and $\varepsilon=(\varepsilon_I)_{I\in A}$ be such that $|\varepsilon_I|=1$ for all $I\in A$.
For $J\in A$ set
\[
 R_ J=J\setminus \cup\{ I \colon I\in A, n(I)>n(J) \}.
\] 
Taking into account that, for $n\in\NN$, the collection of dyadic intervals $\{I\in\DDD \colon n(I)=n\}$ is a partition on $[0,1)$ we infer that
\begin{itemize}
\item $(R_I)_{I\in A}$ is a partition of $K=\cup_{I\in A} I$, 
\item $R_I\subseteq I$ for every $I\in A$, and
\item given $t\in K$ and $k\in\NN$ there is at most one
 interval $I_{t,k}\in A\cap\DDD_k$ such that $t\in I_{t,k}$; moreover $n(I_{t,k}) \le n(J)$.
\end{itemize}
Consequently, for any $t\in K$ we have
\begin{align*}
\left|\sum_{I\in A} \varepsilon_I h_I^{(p)}(t)\right|
&\le \sum_{I\in A} \left\vert h_I^{(p)}(t)\right\vert\\
&=\sum_{I\in A} 2^{n(I)/p} \chi_I(t)\\
&\le \sum_{J\in A}\left( \sum_{n=-\infty}^{n(J)} 2^{n/p}\right) \chi_{R_J}(t)\\
&= 
\frac{1}{1-2^{-1/p}} \sum_{J\in A} 2^{n(J)/p}\chi_{R_J}(t).
\end{align*}
Hence,  if we set $a_p=1/(1-2^{-1/p})$ we obtain
\begin{align*}
\left\Vert \sum_{I\in A} \varepsilon_I h_I^{(p)}\right\Vert_p
&\le a_p \left(\sum_{J\in A} 2^{n(J)} |R_J|\right)^{1/p}\\
&\le a_p \left(\sum_{J\in A} 2^{n(J)} |J|\right)^{1/p}\\
&= a_p |A|^{1/p}.
\end{align*}
Therefore, 
\[
\left\Vert h_0^{(p)}+ \sum_{I\in A} \varepsilon_I h_I^{(p)}\right\Vert_p\le  1+a_p |A|^{1/p}.\] We infer that, for  $m\in\NN$,
\[
\varphi^{\epsilon}_m[\HH^{(p)},L_p]\le \max\{ a_p m^{1/p}, 1+a_p (m-1)^{1/p}\}\le 2 a_p m^{1/p}.
\]
The fact that $\varphi^{\epsilon,*}_m[\HH^{(p)},L_p]=\varphi^{\epsilon}_m[\HH^{(p')},L_{p'}]$ for 
 $m\in\NN$ yields
\[
\frac{\varphi^{\epsilon}_m[\HH^{(p)},L_p]\, \varphi^{\epsilon,*}_m[\HH^{(p)},L_p]}{m}\leq 4 a_p a_{p'}
\frac{m^{1/p}m^{1/p'}}{m}=4  a_p a_{p'}=D_p.
\]
Thus $\Delta_{sb}[\HH^{(p)},L_p]\le D_p$.
\end{proof}
\begin{corollary}\label{AsymptoticDemocracy} We have
\begin{align*}
\Delta_{b}[\HH^{(p)},L_p] & \approx \Delta_{s}[\HH^{(p)},L_p]
 \approx \Delta_{sd}[\HH^{(p)},L_p]
\approx \Delta[\HH^{(p)},L_p] 
\approx \Delta_{d}[\HH^{(p)},L_p]\\
&\approx p^*
\end{align*}
for $1<p<\infty$.

\end{corollary}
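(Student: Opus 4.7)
My plan is to combine the lower bound from Proposition~\ref{LowerEstimateHaar} with the upper bound from Proposition~\ref{UpperEstimateHaar}, and to sandwich all five democracy-type constants between them using the trivial comparisons between them. The first step is to note that, straight from the definitions, for any basis $\BB$ in any Banach space $\XX$ we have
\[
\Delta_{d}[\BB,\XX] \le \Delta[\BB,\XX] \le \Delta_{s}[\BB,\XX] \le \Delta_{sb}[\BB,\XX],
\qquad \Delta_{d}[\BB,\XX] \le \Delta_{sd}[\BB,\XX] \le \Delta_{s}[\BB,\XX],
\]
and, by choosing all unimodular signs equal to $1$ in the super-bi-democratic inequality, $\Delta_{b}[\BB,\XX]\le \Delta_{sb}[\BB,\XX]$. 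Combined with $\Delta[\BB,\XX]\le \Delta_{b}[\BB,\XX]$ from Remark~\ref{FromBiDemToDem}, these trivial bounds show that each of the five constants appearing in the corollary is squeezed between $\Delta_{d}[\HH^{(p)},L_p]$ and $\Delta_{sb}[\HH^{(p)},L_p]$.

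Thus it suffices to prove
\[
d_p \;\lesssim\; \Delta_{d}[\HH^{(p)},L_p] \;\le\; \Delta_{sb}[\HH^{(p)},L_p] \;\lesssim\; D_p \;\approx\; p^*,
\]
where the left inequality is Proposition~\ref{LowerEstimateHaar} and the right one is Proposition~\ref{UpperEstimateHaar}. What remains is the elementary task of verifying $d_p\approx D_p\approx p^*$ uniformly in $p\in(1,\infty)$.

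For the asymptotics, I would exploit the standard bounds $y\le e^y-1\le y\,e^y$ valid for $y\ge 0$ to derive
\[
\frac{\ln 2}{x} \;\le\; 2^{1/x}-1 \;\le\; \frac{\ln 2}{x}\,2^{1/x} \;\le\; \frac{2\ln 2}{x}, \qquad x\ge 1,
\]
so that $2^{1/x}-1\approx 1/x$ with universal constants on $[1,\infty)$. Inserting this into the definition of $d_p=(2^{1/p^\#}-1)/(2^{1/p^*}-1)$ gives $d_p\approx p^*/p^\#$, and since $p^\#\in(1,2]$ we conclude $d_p\approx p^*$. The same estimate applied to $D_p = 8/((2^{1/p}-1)(2^{1/p'}-1))$ yields $D_p\approx 8 p\,p' = 8\,p^\# p^*\approx p^*$, again because $p^\#$ is bounded.

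There is no genuine obstacle here: once Propositions~\ref{LowerEstimateHaar} and \ref{UpperEstimateHaar} are in hand, the corollary reduces to bookkeeping plus a one-line asymptotic analysis of $2^{1/x}-1$. The only point requiring mild care is that the equivalences must be \emph{uniform} in $p^*$, which is automatic from the explicit constants in the estimate for $2^{1/x}-1$ displayed above.
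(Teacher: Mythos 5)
Your proposal is correct and follows essentially the same route as the paper: both arguments squeeze all the democracy-type constants between $\Delta_{d}$ and $\Delta_{sb}$ via the trivial comparisons (including Remark~\ref{FromBiDemToDem}), then invoke Propositions~\ref{LowerEstimateHaar} and~\ref{UpperEstimateHaar} together with the elementary equivalence $d_p\approx D_p\approx p^*$. Your write-up merely makes explicit the asymptotics $2^{1/x}-1\approx 1/x$ that the paper leaves to the reader.
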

\begin{proof} Let $D_p$ and $d_p$ be  as in Proposition~\ref{LowerEstimateHaar} and Propostion~\ref{UpperEstimateHaar}. We have $d_p\approx D_p\approx p^*$ for $1<p<\infty$. Then the result  follows by combining Proposition~\ref{LowerEstimateHaar}, Propostion~\ref{UpperEstimateHaar}, and Remark~\ref{FromBiDemToDem}.
\end{proof}

We close by providing the conclusion of the proof of our main theorem and
enunciating a corollary. 
\begin{proof}[Conclusion of the Proof of Theorem~\ref{ABGAHS}] Combine Theorem~\ref{BidemocracyEstimate} with the left-hand side of inequality  \eqref{UncDemEstimate},  Theorem~\ref{UpperEstimateHaar}
and Theorem~\ref{UnconditionalEstimate}.
\end{proof}

 \begin{corollary}  $C_a[\HH^{(p)},L_p]\approx p^*$ for $1<p<\infty$.
 \end{corollary}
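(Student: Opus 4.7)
The proof will be essentially a bookkeeping exercise, combining what has already been established. The plan is to sandwich $C_a[\HH^{(p)},L_p]$ between two quantities of order $p^*$: the symmetry for largest coefficients constant sits above the super-democracy constant on disjoint sets, and sits below the greedy constant.

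For the upper bound, the left-hand side of \eqref{UncSLCEstimate} gives $C_a[\BB,\XX]\le C_g[\BB,\XX]$ for any unconditional basis $\BB$. Applying this to $\BB=\HH^{(p)}$ in $\XX=L_p$ and invoking Theorem~\ref{ABGAHS} yields $C_a[\HH^{(p)},L_p]\lesssim p^*$ for $1<p<\infty$.

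For the lower bound, I would use the inequality $C_a[\BB,\XX]\ge \Delta_{sd}[\BB,\XX]$ noted in the paragraph following \eqref{UncSLCEstimate}. (This inequality follows at once from the definition of $C_a$ by taking $f=0$ in \eqref{SSLDefi}, which reduces \eqref{SSLDefi} to \eqref{SuperDemDefi} with the disjointness assumption $A\cap B=\emptyset$.) Combining this with the asymptotic identity $\Delta_{sd}[\HH^{(p)},L_p]\approx p^*$ from Corollary~\ref{AsymptoticDemocracy} gives $C_a[\HH^{(p)},L_p]\gtrsim p^*$.

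Putting the two bounds together delivers $C_a[\HH^{(p)},L_p]\approx p^*$ for $1<p<\infty$, as desired. There is no real obstacle here: once Theorem~\ref{ABGAHS} and Corollary~\ref{AsymptoticDemocracy} are in hand, the estimate for $C_a$ is automatic, and in particular it exhibits the same asymptotic behavior as both the greedy constant and the (super-)democracy constants of the Haar system in $L_p$.
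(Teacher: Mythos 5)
Your proof is correct and follows exactly the paper's route: the upper bound $C_a\lesssim p^*$ from the left-hand side of \eqref{UncSLCEstimate} together with Theorem~\ref{ABGAHS}, and the lower bound from $C_a\ge \Delta_{sd}$ together with Corollary~\ref{AsymptoticDemocracy}. You merely spell out the (trivial) justification of $C_a\ge\Delta_{sd}$ that the paper takes for granted.
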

 
 \begin{proof}Just combine the left-hand side inequality in \eqref{UncSLCEstimate}, with Corollary~\ref{AsymptoticDemocracy} and Theorem~\ref{ABGAHS}.
 \end{proof}
 
 \section*{Annex: Summary of  the most commonly employed constants}
 
 \begin{table}[ht]
\begin{center}
\begin{tabular}{c c c}\hline
& & \\
{\bf Symbol}& {\bf Name of constant} &  {\bf Ref. equation}\\
& & \\
 \hline
 & & \\
$C_{a}$ & Symmetry for largest coeffs. constant & \eqref{SSLDefi} \\ 
& & \\
$C_{g}$ & Greedy constant & \eqref{LebGreedIneq} \\ 
             & & \\
$\Delta $ & Democracy constant & \eqref{DefDemocracy} \\ 
             & & \\
$\Delta_{b}$         & Bi-democracy constant & \eqref{BiDemDefi} \\
  & & \\
 $\Delta_{d}$ & Disjoint-democracy constant & \eqref{DefDemocracy}) \\ 
 & & \\
$\Delta_{s}$ & Superdemocracy constant &  \eqref{SuperDemDefi} \\ 
& & \\
$\Delta_{sb}$ & Super bi-democratic constant & \eqref{CSB} \\ 
& & \\
$\Delta_{sd}$ & Disjoint-superdemocracy constant & \eqref{SuperDemDefi} \\             
             & & \\
$K_{su}$ & Suppression unconditional constant & \eqref{SupUncDef} \\ 
             & & \\
$K_{u}$ & Lattice unconditional constant & \eqref{LatUncDef} \\ 
            \end{tabular}
 \end{center}
\end{table}

\subsection*{Acknowledgments} F. Albiac and J. L. Ansorena acknowledge the support of the grant MTM2014-53009-P (MINECO, Spain). F. Albiac was also  supported by the grant    MTM2016-76808-P (MINECO, Spain). P. Bern\'a was supported by a Ph.D.\ fellowship from the program FPI-UAM, as well as the grants MTM-2016-76566-P (MINECO, Spain) and 19368/PI/14 (\emph{Fundaci\'on S\'eneca}, Regi\'on de Murcia, Spain).
\begin{bibsection}
\begin{biblist}

\bib{AA2017bis}{article}{
 author={Albiac, F.},
 author={Ansorena, J.~L.},
 title={Characterization of 1-almost greedy bases},
 journal={Rev. Mat. Complut.},
 volume={30},
 date={2017},
 number={1},
 pages={13--24},
}

\bib{AlbiacKalton2016}{book}{
 author={Albiac, F.},
 author={Kalton, N.~J.},
 title={Topics in Banach space theory, 2nd revised and updated edition},
 series={Graduate Texts in Mathematics},
 volume={233},
 publisher={Springer International Publishing},
 date={2016},
 pages={xx+508},
 }
 
 \bib{AW2006}{article}{
 author={Albiac, F.},
 author={Wojtaszczyk, P.},
 title={Characterization of 1-greedy bases},
 journal={J. Approx. Theory},
 volume={138},
 date={2006},
 number={1},
 pages={65--86},
}

\bib{BBG2017}{article}{
 author={ Bern\'a, P.~M.},
 author={Blasco, \'O.},
 author={ Garrig\'os, G.},
 title={Lebesgue inequalities for the greedy algorithm in general bases},
 journal={Rev. Mat. Complut.},
 volume={30},
 date={2017},
 number={1},
 pages={369--392},
}

 \bib{Burk}{article}{
 author={Burkholder, D.},
 title={A proof of Pelczy\'nski's conjecture for the Haar system},
 journal={Studia Mathematica},
 volume={91},
 date={1988},
 number={1},
 pages={79-83},
}

\bib{DKKT2003}{article}{
 author={Dilworth, S.~J.},
 author={Kalton, N.~J.},
 author={Kutzarova, D.},
 author={Temlyakov, V.~N.},
 title={The thresholding greedy algorithm, greedy bases, and duality},
 journal={Constr. Approx.},
 volume={19},
 date={2003},
 number={4},
 pages={575--597},
}

\bib{DKOSZ2014}{article}{
 author={Dilworth, S. J.},
 author={Kutzarova, D.},
 author={Odell, E.},
 author={Schlumprecht, Th.},
 author={Zs{\'a}k, A.},
 title={Renorming spaces with greedy bases},
 journal={J. Approx. Theory},
 volume={188},
 date={2014},
 pages={39--56},
}

\bib{GHO2013}{article}{
 author={Garrig{\'o}s, G.},
 author={Hern{\'a}ndez, E.},
 author={Oikhberg, T.},
 title={Lebesgue-type inequalities for quasi-greedy bases},
 journal={Constr. Approx.},
 volume={38},
 date={2013},
 number={3},
 pages={447--470},
 }
 
 \bib{KonyaginTemlyakov1999}{article}{
 author={Konyagin, S. V.},
 author={Temlyakov, V. N.},
 title={A remark on greedy approximation in Banach spaces},
 journal={East J. Approx.},
 volume={5},
 date={1999},
 number={3},
 pages={365--379},
}

\bib{Temlyakov1998}{article}{
   author={Temlyakov, V.~N.},
   title={The best $m$-term approximation and greedy algorithms},
   journal={Adv. Comput. Math.},
   volume={8},
   date={1998},
   number={3},
   pages={249--265},
}

\bib{Temlyakov2008}{article}{
   author={Temlyakov, V.~N.},
   title={Greedy approximation},
   journal={Acta Numer.},
   volume={17},
   date={2008},
   pages={235--409},
   issn={0962-4929},
}
\end{biblist}
\end{bibsection}

\end{document}